\documentclass[11pt]{article}
\usepackage[utf8]{inputenc}
\usepackage{amsthm, amsmath, amssymb, amsfonts, url, booktabs, tikz, setspace, fancyhdr, bm}
\usepackage{hyperref}
\usepackage{geometry}
\usepackage{hyperref, enumerate}
\usepackage[shortlabels]{enumitem}
\usepackage{enumitem}
\usepackage[english]{babel}
\usepackage[capitalise]{cleveref}
\usepackage{bbm,tkz-graph,subcaption}
\usepackage{csquotes}
\usepackage{mathrsfs}
\usepackage{mathabx}
\usetikzlibrary{patterns}
\usetikzlibrary{shapes}
\usepackage{bbm,wrapfig}
\usepackage{xspace}

\usepackage{algorithm}
\usepackage{algorithmic}
\usepackage[utf8]{inputenc} 
\usepackage[T1]{fontenc}
\usepackage{amsfonts}
\usepackage{amscd}
\usepackage{graphicx}
\usepackage{enumitem}
\usepackage{array, booktabs, makecell}
\usepackage{verbatim}
\usepackage{hyperref}
\usepackage{amsmath,caption}
\usepackage{url,pdfpages,xcolor,framed,color}
\usepackage{todonotes}
\usepackage{comment}

\newtheorem{thr}{Theorem}[section]

\newtheorem{lem}[thr]{Lemma}

\newtheorem{conj}[thr]{Conjecture}
\theoremstyle{definition}

\newtheorem*{defi*}{Definition}
\newtheorem{cor}[thr]{Corollary}

\newtheorem{obs}[thr]{Observation}
\newtheorem{prob}[thr]{Problem}

\newcommand*{\myproofname}{Proof}

\tikzstyle{P} = [draw, circle, black, fill, inner sep = 0pt, minimum width = 3pt]
\tikzstyle{every loop} = []

\usetikzlibrary{calc}

\title{Counterexamples to the Albertson-Berman conjecture: minimum order, connectivity and an improved ratio bound}

\date{}
\author{
{\sc Wouter CAMES VAN BATENBURG}$^{\ast}$,
{\sc Jan GOEDGEBEUR}$^{\dagger,\ddagger}$,
{\sc Jorik JOOKEN}$^{\diamond,\dagger}$\\[1mm]
{\small $^{\ast}$D\'epartement d'Informatique,
Universit\'e libre de Bruxelles, Belgium}\\
{\small $^{\dagger}$Department of Computer Science, KU Leuven Kulak,
8500 Kortrijk, Belgium.}\\
{\small $^{\ddagger}$Department of Mathematics, Computer Science and Statistics,
Ghent University, 9000 Ghent, Belgium.}\\
{\small $^{\diamond}$Mathematical Institute, Leiden University, Einsteinweg 55,
2333 CC Leiden, The Netherlands.}
}

\begin{document}
\maketitle

\begin{abstract}
\noindent In 1979, Albertson and Berman conjectured that every planar graph $G$ contains an induced forest of order at least $|V(G)|/2$. This long-standing conjecture was recently disproved by several explicit counterexamples, which naturally led to several extremal and structural questions that we answer. We combine mathematical arguments and exhaustive computations to show that the minimum order of a counterexample is $29$. We also construct infinitely many $4$-connected $5$-edge-connected counterexamples (and show that the unique such counterexample of minimum order has order $41$), whereas previously all known counterexamples had vertex-connectivity at most $3$. Furthermore, we construct an infinite family of planar graphs on $n$ vertices whose maximum induced forests have order at most $\frac{25}{52}n$, thereby improving the previous best upper bound. This family also yields infinitely many counterexamples (for every integer $d \geq 7$) to a conjecture of Chappell and Pelsmajer concerning induced forests of maximum degree at most $d$.

\bigskip\noindent \textbf{Keywords:} Albertson-Berman conjecture; induced forest; planar graphs; exhaustive generation.

\end{abstract}

\section{Introduction}

All graphs discussed in this paper are simple non-empty graphs. For a graph $G$, let $a(G)$ be the maximum number of vertices among all induced forests of $G$ and let
$$
c_{\mathcal P}:=\inf\left\{\frac{a(G)}{|V(G)|}: G \text{ is a planar graph}\right\}.
$$

By the Four Colour Theorem, a planar graph on $n$ vertices contains an independent set of order at least $n/4$. Cranston and Rabern~\cite{CR16} note that Erd\H{o}s asked in 1968 already whether this consequence could be proved more easily (i.e., without relying on the Four Colour Theorem). Cranston and Rabern~\cite{CR16} improved the best bound that does not rely on the Four Colour Theorem to $3n/13$. Every forest is bipartite and therefore an induced forest containing at least $n/2$ vertices would also yield an independent set containing at least $n/4$ vertices. Therefore, finding large induced forests can be viewed as a strengthening of this classical consequence of the Four Colour Theorem.

A classical result of Borodin~\cite{B79} states that every planar graph admits a proper $5$-colouring such that the union of every two colour classes induces a forest. This immediately yields the bound $c_{\mathcal P}\geq \frac{2}{5}$. It is trivial that $c_{\mathcal P}\leq 1/2$, since $a(K_4)=2$. In 1979, Albertson and Berman~\cite{AB79} made the following well-known conjecture.

\begin{conj}[Albertson-Berman conjecture~\cite{AB79}]
\label{conj:AB}
    We have $c_{\mathcal P}\geq 1/2$.
\end{conj}

This conjecture remained open for almost five decades and led to a large stream of research on large induced forests and related vertex-decomposition problems. The prominence of the Albertson-Berman conjecture can also be seen by its inclusion in the collections of open problems by West~\cite{WestProblems} and Mohar~\cite{MoharProblems}. 

We now briefly discuss several related papers. Alon, Mubayi and Thomas~\cite{AMT01} proved several lower bounds on $a(G)$ that are especially useful for sparse graphs. Several restricted classes of planar graphs were also studied. Borodin and Glebov~\cite{BG01} showed that the vertex set of every planar graph of girth at least $5$ can be partitioned into an induced forest and an independent set. Subsequently, this result was strengthened by Kawarabayashi and Thomassen~\cite{KT09}, who showed a list-colouring version thereof. Le~\cite{Le18} investigated the class of triangle-free planar graphs and showed that every such graph satisfies $\frac{a(G)}{|V(G)|} \geq \frac{5}{9}$. Bonamy, Kardo\v{s}, Kelly and Postle~\cite{BKKP20} studied the fractional vertex-arboricity of planar graphs and proposed fractional strengthenings of several conjectures about large induced forests, including the Albertson-Berman conjecture. More recently, an equivalent formulation of the Albertson-Berman conjecture in terms of robust connectivity of planar graphs was obtained by Bradshaw, Masa\v{r}\'ik, Novotn\'a and Stacho~\cite{BMNS22}.

In August 2026, the Albertson-Berman conjecture was finally disproved by several explicit counterexamples that were independently presented by different authors. Jung~\cite{J26} presented a planar triangulation on $31$ vertices in which all maximum induced forests have order $15$, yielding $c_{\mathcal P}\leq \frac{15}{31} \approx 0.4839$. Independently, Makarov~\cite{M26} presented two counterexamples on $39$ and $76$ vertices, yielding $c_{\mathcal P}\leq \frac{37}{76} \approx 0.4868< \frac{19}{39} \approx 0.4872$. These results naturally lead to the following extremal and structural questions that we deal with in the current paper: what is the minimum order of a counterexample to the Albertson-Berman conjecture, can counterexamples have stronger connectivity properties than the known counterexamples, and how small can $c_{\mathcal{P}}$ be?

In Section~\ref{sec:minOrder}, we show that the minimum order of a counterexample to the Albertson-Berman conjecture is $29$ and present two counterexamples that yield $c_{\mathcal P}\leq \frac{14}{29} \approx 0.4828$, whereas in Section~\ref{sec:4Connected} we present infinitely many $4$-connected $5$-edge-connected planar counterexamples and show that the unique such counterexample of minimum order has order $41$. In contrast, all previously known counterexamples to the Albertson-Berman conjecture have vertex-connectivity at most $3$. This positively answers Question~2 of Jung~\cite{J26}. In Section~\ref{sec:ratio2552}, we present an infinite family of counterexamples to the Albertson-Berman conjecture that yield $c_{\mathcal{P}} \leq \frac{25}{52} \approx 0.4808$ (this is the best bound so far) and our exhaustive computations show that no $4$-connected $5$-edge-connected counterexample on at most $41$ vertices yields a better bound. This family also yields infinitely many counterexamples (for every integer $d \geq 7$) to a conjecture of Chappell and Pelsmajer~\cite{CP13} concerning induced forests of maximum degree at most $d$. Finally, in Section~\ref{sec:openProblems} we close this paper by mentioning several natural open problems that spark our interest.

\section{The minimum order of a counterexample to the Albertson-Berman conjecture is $29$}
\label{sec:minOrder}
The $29$-vertex counterexamples presented at the end of this section were the starting point of this project and were found before the exhaustive computations described below. These computations were subsequently carried out to establish that no smaller counterexample exists.

We first make the following elementary observation, which allows us to restrict our attention to planar triangulations.

\begin{obs}
\label{obs:ToPlanarTriangulation}
    Let $G$ be a planar graph on $n$ vertices that is a counterexample to Conjecture~\ref{conj:AB}. Then there exists a planar triangulation $G'$ on $n$ vertices that is also a counterexample to Conjecture~\ref{conj:AB}.
\end{obs}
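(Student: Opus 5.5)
The plan is to add edges to $G$ until it becomes a triangulation, using the fact that adding edges can never enlarge an induced forest. Concretely, we fix a planar embedding of $G$ and extend it to a maximal planar graph $G'$ on the same vertex set $V(G)$ by repeatedly inserting edges inside faces while planarity and simplicity are preserved. We then argue that $a(G') \le a(G)$. Since $G$ is a counterexample, $a(G) < n/2$, so this gives $a(G') < n/2$, and $G'$ is a counterexample on the same $n$ vertices.

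The monotonicity step is immediate. Since $E(G) \subseteq E(G')$ and the vertex sets agree, for any $S \subseteq V(G)$ the induced subgraph $G[S]$ is a spanning subgraph of $G'[S]$. If $G'[S]$ is acyclic, then so is its subgraph $G[S]$. Hence every induced forest of $G'$ is an induced forest of $G$ on the same vertex set, and therefore $a(G') \le a(G)$.

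The only step needing care is that a maximal planar simple graph on $n$ vertices is indeed a planar triangulation, together with the small cases.
\begin{itemize}
\item For $n \ge 3$, this is the standard fact that a maximal planar graph has all faces bounded by triangles. If some face had a boundary walk of length at least $4$, one could add a chord inside it. The usual argument handles the situation where one chord already exists outside the face by using the opposite pair of vertices instead.
\item For $n \le 2$, $G$ cannot be a counterexample. Indeed, $a(G) = n \ge n/2$ whenever $n \le 2$, since every graph on at most $2$ vertices is a forest.
\item For $n = 3$, $a(G) \ge 2 \ge 3/2$, so $G$ is again not a counterexample. Thus the small cases are vacuous, which sidesteps any convention issue about what counts as a triangulation on few vertices.
\end{itemize}
I expect no real obstacle here. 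The maximal-planar-implies-triangulation fact is classical and may simply be cited.
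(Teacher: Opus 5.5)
Your proposal is correct and follows the paper's own argument: add edges until the graph is a triangulation, and note that every induced forest of $G'$ is an induced forest of $G$, so $a(G')\le a(G)<n/2$. Your additional remarks, that maximal planar graphs are triangulations and that the cases $n\le 3$ are vacuous, are harmless details the paper leaves implicit.
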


Indeed, one can obtain a planar triangulation $G'$ by adding edges to $G$ until every face is a triangle. Every induced forest of $G'$ is also an induced forest of $G$ and therefore $a(G') \leq a(G)$.

We now establish the following lemma.
\begin{lem}
\label{lem:order25And27}
    If there exists a counterexample to Conjecture~\ref{conj:AB} on at most $28$ vertices, then there exists a planar triangulation on $25$ or $27$ vertices that is a counterexample to Conjecture~\ref{conj:AB}.
\end{lem}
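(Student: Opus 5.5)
The plan is to combine Observation~\ref{obs:ToPlanarTriangulation} with two reductions: a parity argument that moves from even order $n$ to odd order $n-1$, and a small-order argument that disposes of all orders below $25$. By Observation~\ref{obs:ToPlanarTriangulation} we may assume the counterexample $G$ is a planar triangulation on $n\le 28$ vertices with $a(G)<n/2$.

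\textbf{Step 1 (parity).} Suppose $n$ is even, so $a(G)\le n/2-1$. Delete any vertex $v$ to get $G-v$, which is planar on $n-1$ vertices. Every induced forest of $G-v$ is an induced forest of $G$, so
\[
a(G-v)\le a(G)\le \frac{n}{2}-1<\frac{n-1}{2}.
\]
Hence $G-v$ is a counterexample of odd order $n-1$. Applying Observation~\ref{obs:ToPlanarTriangulation} again turns it into a triangulation of the same order. So we may assume $n$ is odd with $n\le 27$.

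\textbf{Step 2 (small orders).} It remains to exclude all odd $n\le 23$; we need this for every $n\le 24$ anyway, since even orders reduce to odd ones by Step~1. Three options, in the order I would try them:
\begin{enumerate}[(a)]
\item Invoke known lower bounds on $a(G)$. Borodin's acyclic $5$-colouring gives only $a(G)\ge \lceil 2n/5\rceil$, which is too weak beyond tiny $n$.
\item Run an exhaustive computation: generate all planar triangulations up to $23$ vertices (for instance with plantri) and compute $a(G)$ for each. This is likely what the paper does, possibly restricted to certain orders.
\item Find a monotonicity argument. Show that a counterexample on $n$ vertices yields one on $n+2$ vertices, say by gluing a $K_4$ into a face. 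Let $H$ be obtained by inserting two new vertices $x,y$ into a face $abc$ of $G$, with $x$ adjacent to $a,b,c$ and $y$ adjacent to $x$ and two of $a,b,c$, so that $H$ is again a triangulation. Then $a(H)\le a(G)+1$: the two new vertices together with the face vertices contain a $K_4$-like obstruction, so an induced forest can gain at most one of them. It would then follow that $a(H)<(n+2)/2$.
\end{enumerate}
If (c) works, a counterexample of odd order $n\le 23$ propagates upward to one of order $25$ or $27$, so no separate check of small orders is needed.

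\textbf{Main obstacle.} The hard step is verifying $a(H)\le a(G)+1$ in (c). A forest of $H$ might use both $x$ and $y$ while omitting some of $a,b,c$, and that omission has to be charged against what the forest gains. Concretely, from a forest $F$ of $H$ containing both $x$ and $y$ one must produce a forest of $G$ of size at least $|F|-1$. This may fail in general, and then some face vertices must be re-added to the forest.

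If it does fail, the fallback is a different insertion: add a vertex of degree $3$ inside a face together with a suitable companion, or glue a copy of $K_4$ along a facial triangle. For the $K_4$ gluing the bound $a\le a(G)+1$ follows by a direct case analysis on $|F\cap\{a,b,c\}|$. Failing any such construction, I would rely on option (b), the exhaustive enumeration, for orders up to $23$.
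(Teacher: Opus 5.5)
Your Step~1 matches the paper, but Step~2 is left open, and the route you favour, (c), does not work as stated.

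\textbf{Why (c) fails.} The inequality $a(H)\le a(G)+1$ is false in general. A forest of $H$ containing both $x$ and $y$ must avoid $a$ and $b$. However, $F\setminus\{x,y\}$ can already be a maximum induced forest of $G$, and then $a(H)=a(G)+2$. For instance, take $G=K_4$ on $\{a,b,c,d\}$, with $x$ in face $abc$ and $y$ adjacent to $x,a,b$. Then $\{d,c,x,y\}$ induces the path $dcxy$ in $H$, so $a(H)\ge 4=a(G)+2$.

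\textbf{Why the fallbacks fail.} Stacking a vertex into a face (``gluing $K_4$ along a facial triangle'') adds only one vertex. For odd $n$ it gives $a(H)\le a(G)+1\le\frac{n+1}{2}=\frac{|V(H)|}{2}$, which is not strict, so being a counterexample is not preserved. Doing it twice fails for the same reason. Option (b) would be logically sufficient, but it is an unperformed and heavy enumeration that sidesteps the lemma rather than proving it.

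\textbf{The missing idea.} You do not need to increase the order by $2$; increasing it by $4$ suffices. For an odd-order counterexample $G$ on $n$ vertices, the disjoint union satisfies $a(G+K_4)=a(G)+2<\frac{n+4}{2}$, so counterexample status is preserved exactly. Observation~\ref{obs:ToPlanarTriangulation} then triangulates without changing the order. Every odd $n\le 23$ is congruent modulo $4$ to either $25$ or $27$, so repeating this reaches order $25$ or $27$. This is the paper's argument, and it needs no computation and no gadget inserted into a face.
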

\begin{proof}
    Suppose $G_1$ is a counterexample on  $n \leq 28$ vertices. We have $a(G_1) < \frac{n}{2}$. Let $v \in V(G_1)$ be any vertex in $G_1$ and let $G_2$ be obtained by deleting $v$ from $G_1$. If $n$ is even, we have $a(G_2)\leq a(G_1)\leq \frac{n}{2}-1<\frac{n-1}{2}$ and therefore $G_2$ is again a counterexample. Hence, we may assume there exists a counterexample $H$ on $m \leq 27$ vertices, where $m$ is odd.
    
    If $m \in \{25,27\}$, the lemma follows from applying Observation~\ref{obs:ToPlanarTriangulation} to $H$. If $m \notin \{25,27\}$, we can obtain a counterexample $H'$ on $m+4$ vertices that is a planar triangulation by taking the disjoint union of $K_4$ and $H$ and applying Observation~\ref{obs:ToPlanarTriangulation}, since $a(H+K_4)=a(H)+a(K_4)=a(H)+2<\frac{m+4}{2}$. By repeatedly applying this argument, the lemma follows.
\end{proof}

We now recall a classical theorem due to Stein~\cite{S71}, relating induced forests in planar triangulations to Hamiltonian cycles in their dual.

\begin{thr}[Stein~\cite{S71}]
\label{th:nonHamiltonianDual}
    Let $G$ be a planar triangulation and let $G^*$ be its dual. Then there exists a partition of $V(G)$ into two sets $V_1$ and $V_2$ such that both $G[V_1]$ and $G[V_2]$ are forests if and only if $G^*$ is Hamiltonian.
\end{thr}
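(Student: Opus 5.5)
The plan is to go through the standard Jordan-curve correspondence between Hamiltonian cycles of the dual $G^*$ and bipartitions of the faces of $G$. Since $G$ is a triangulation, $G^*$ is a cubic plane graph whose vertices are the triangular faces of $G$. The dual edges crossing a set $S$ of primal edges form a subgraph of $G^*$, and a set of primal edges is acyclic exactly when the complementary dual edges form a connected spanning subgraph of $G^*$. This is the usual duality between spanning trees and cotrees, and between cycles and bonds.

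For the direction ``$G^*$ Hamiltonian $\Rightarrow$ partition'', take a Hamiltonian cycle $C^*$ in $G^*$ and draw it as a simple closed curve. By the Jordan curve theorem it splits the sphere into two regions.
\begin{itemize}
\item Since $C^*$ passes through every face of $G$ and crosses only edges, each vertex of $G$ lies strictly inside one of the regions. Let $V_1$ be the vertices inside and $V_2$ the vertices outside.
\item An edge of $G$ is crossed by $C^*$ exactly when its dual edge lies in $C^*$, and then its endpoints lie on opposite sides. So the edges of $G[V_1]$ are exactly the primal edges inside the curve.
\item Suppose $G[V_1]$ contained a cycle $Z$. Then $Z$ is a closed curve inside the region, and it separates the sphere into two sides. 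The faces of $G$ on either side of $Z$ are nonempty, and some face on the side away from the curve lies entirely within the interior region. But $C^*$ visits every face while never crossing $Z$, since $Z$'s edges are uncrossed. So $C^*$ would have to stay on one side of $Z$, which contradicts it visiting faces on both sides.
\end{itemize}
Hence $G[V_1]$ is a forest, and symmetrically so is $G[V_2]$.

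For the converse, suppose $V_1, V_2$ partition $V(G)$ with both $G[V_1]$ and $G[V_2]$ forests. Let $E_{12}$ be the set of crossing edges, and $C^* \subseteq G^*$ the set of dual edges of $E_{12}$.
\begin{itemize}
\item Every triangle of $G$ has vertices from both classes, since neither class induces a triangle. So each face has exactly two crossing edges, and every vertex of $G^*$ has degree exactly $2$ in $C^*$. Thus $C^*$ is a spanning 2-regular subgraph, i.e.\ a disjoint union of cycles covering all faces.
\item It remains to show $C^*$ is connected. This is the step I expect to be the main obstacle.
\end{itemize}
For connectedness, I would use the tree/cotree duality. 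The edges not in $E_{12}$ are exactly $E(G[V_1]) \cup E(G[V_2])$, and this set is acyclic. Therefore the dual edges of its complement, namely $C^*$, form a connected spanning subgraph of $G^*$. A set of primal edges is acyclic iff no cycle of it bounds a region, iff the complementary dual edge set contains no bond cut, iff it is connected. A connected 2-regular spanning graph is a Hamiltonian cycle.

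Alternatively, connectedness can be shown directly. If $C^*$ had two components, pick one cycle $D$ of $C^*$. By Jordan, $D$ separates the faces into two nonempty sides. The boundary between those sides in $G$ is formed by uncrossed edges with both endpoints in the same class. Along $D$, the vertices just inside form a closed walk in $G[V_i]$ for one fixed $i$, because crossing edges only switch sides transversally to $D$. That closed walk encloses a face not on $D$, so it contains a cycle, contradicting that $G[V_i]$ is a forest. I would present the cotree argument as the main route and keep this Jordan argument as backup.
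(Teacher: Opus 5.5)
The paper does not prove this theorem. It quotes it from Stein (1971) and uses it only as a black box to restrict the search to triangulations with nonhamiltonian duals. So there is no proof in the paper to compare against. Your argument is a correct, self-contained proof, and it is the standard planar-duality proof.

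\textbf{Forward direction.} The Jordan-curve argument is sound. Any cycle $Z$ made of edges not crossed by $C^*$ has faces of $G$ on both sides. The curve $C^*$ is disjoint from $Z$, yet it passes through an interior point of every face. The sentence about ``some face on the side away from the curve'' is unnecessary.

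\textbf{Converse.} The $2$--$1$ split of each facial triangle gives a spanning $2$-regular subgraph. Connectivity follows from the cycle--bond duality. Since $E(G)\setminus E_{12}$ is acyclic, its dual set $(E(G)\setminus E_{12})^*$ contains no bond of $G^*$. Hence $G^*-(E(G)\setminus E_{12})^*$, which is exactly the spanning subgraph $C^*$, is connected. Your phrase ``the complementary dual edge set contains no bond cut'' gets this backwards: it is the dual of the acyclic set that contains no bond. Your opening paragraph does state the equivalence correctly.

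\textbf{A shorter route.} The same duality gives the forward direction in one line, and it yields the classical stronger form. Since $C^*$ is a cycle of $G^*$, the edge set $E_{12}$ is a bond of $G$. Therefore $G-E_{12}$ has exactly two components, every edge of $E_{12}$ joins them, and $G-E_{12}$ is acyclic because $C^*$ is connected and spanning. So both $G[V_i]$ are in fact trees.

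\textbf{Backup argument.} Your Jordan argument for connectedness is not complete as written. A closed walk in a forest need not contain a cycle, since it can retrace edges. You would have to show that the walk's edge set actually separates the sphere. The cotree argument already settles this step, so the gap does not affect the correctness of your proof.
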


In particular, if $G$ is a planar triangulation whose dual $G^*$ is Hamiltonian, Theorem~\ref{th:nonHamiltonianDual} implies that $G$ is not a counterexample to Conjecture~\ref{conj:AB}. Hence, together with Lemma~\ref{lem:order25And27}, this motivates the study of planar triangulations on $25$ or $27$ vertices whose dual is nonhamiltonian (i.e., the duals are precisely the nonhamiltonian cubic planar $3$-connected graphs on $46$ and $50$ vertices, respectively).

Holton and McKay~\cite{HM88} showed that the smallest nonhamiltonian cubic planar $3$-connected graphs have $38$ vertices and McKay~\cite{McKayCensus} generated an exhaustive list of all nonhamiltonian cubic planar $3$-connected graphs without faces of size $3$ on at most $48$ vertices. Using the generator \texttt{plantri}~\cite{BM07} and a filter for testing Hamiltonicity, we extended the latter list to $50$ vertices. More specifically, we generated all $16\,747\,182\,732\,792$ cubic planar $3$-connected graphs without faces of size $3$ on $50$ vertices and determined that exactly $441\,578$ of them are nonhamiltonian. This computation required approximately 10 CPU years on a cluster.

To also obtain the nonhamiltonian cubic planar $3$-connected graphs that have a face of size $3$, we use a standard operation. In particular, if $G$ is a nonhamiltonian cubic planar $3$-connected graph on $n$ vertices having a face of size $3$, then contracting that face yields a nonhamiltonian cubic planar $3$-connected graph $G'$ on $n-2$ vertices. Hence, if we repeatedly apply this operation, we obtain a nonhamiltonian cubic planar $3$-connected graph without faces of size $3$. This means that the original graph (with faces of size $3$) can be recovered by repeatedly expanding a vertex into a triangle. This allowed us to generate all pairwise non-isomorphic nonhamiltonian cubic planar $3$-connected graphs on at most $50$ vertices. In Table~\ref{tab:counts}, we summarize the total number of such graphs as well as the total number of such graphs without faces of size $3$.

\begin{table}[ht]
    \centering
    \begin{tabular}{c|r|r}
        $n$
        & \multicolumn{1}{c|}{Without faces of size $3$}
        & \multicolumn{1}{c}{Total} \\
        \hline
        $38$ & $6$ & $6$ \\
        $40$ & $37$ & $191$ \\
        $42$ & $277$ & $5\,065$ \\
        $44$ & $1\,732$ & $98\,415$ \\
        $46$ & $11\,204$ & $1\,600\,259$ \\
        $48$ & $70\,614$ & $22\,818\,523$ \\
        $50$ & $441\,578$ & $295\,835\,441$
    \end{tabular}
    \caption{The number of pairwise non-isomorphic nonhamiltonian cubic planar $3$-connected graphs on $n$ vertices (third column) and the number of such graphs without faces of size $3$ (second column).}
    \label{tab:counts}
\end{table}

By taking the duals of all nonhamiltonian cubic planar $3$-connected graphs on $46$ and $50$ vertices and calculating $a(G)$\footnote{In particular, we used the algorithm of Iwata and Imanishi~\cite{I17,II16} to compute the \textit{feedback vertex set number} $\mathrm{fvs}(G)$ (note that $a(G) = |V(G)| - \mathrm{fvs}(G)$).}, we obtain the following.

\begin{obs}
    There exists no counterexample on at most $28$ vertices to Conjecture~\ref{conj:AB}.
\end{obs}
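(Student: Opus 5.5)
The plan is to combine Lemma~\ref{lem:order25And27} with Theorem~\ref{th:nonHamiltonianDual} and the exhaustive list summarised in Table~\ref{tab:counts}, and then finish with a finite computation. We argue by contradiction. Suppose there is a counterexample on at most $28$ vertices. By Lemma~\ref{lem:order25And27} there is then a planar triangulation $G$ on $n\in\{25,27\}$ vertices with $a(G)<n/2$, that is, $a(G)\leq (n-1)/2$.

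The first step is to show that the dual of $G$ is nonhamiltonian. If $G^*$ were Hamiltonian, Theorem~\ref{th:nonHamiltonianDual} would give a partition $V_1\cup V_2$ of $V(G)$ into two induced forests. The larger of the two has at least $n/2$ vertices, which contradicts $a(G)<n/2$.

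Next we identify $G^*$ among the computed graphs. Since $G$ is a triangulation on $n\geq 4$ vertices, $G^*$ is a cubic planar $3$-connected graph. By Euler's formula it has $2n-4$ vertices, so $46$ or $50$ vertices. By the previous step it is nonhamiltonian, so $G^*$ is isomorphic to one of the $1\,600\,259$ graphs on $46$ vertices or the $295\,835\,441$ graphs on $50$ vertices in Table~\ref{tab:counts}. Completeness of that list rests on two facts:
\begin{itemize}
\item the \texttt{plantri} generation of all triangle-free cubic planar $3$-connected graphs;
\item the observation that contracting a triangular face preserves cubicity, planarity, $3$-connectivity and nonhamiltonicity. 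Any Hamiltonian cycle of the contracted graph through the new vertex lifts to one through all three triangle vertices, so every nonhamiltonian graph with triangles arises from a triangle-free one by repeated vertex-to-triangle expansions.
\end{itemize}

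The final step is computational. Dualise each listed graph to recover $G$ (a $3$-connected plane graph has a unique embedding, so the dual is well defined). For each such $G$, compute $\mathrm{fvs}(G)$ with the Iwata--Imanishi solver and check that $a(G)=n-\mathrm{fvs}(G)\geq n/2$, i.e.\ $a(G)\geq 13$ for $n=25$ and $a(G)\geq 14$ for $n=27$. Every graph passing this check contradicts the choice of $G$, which proves the observation.

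The main obstacle is this last step together with the generation feeding it: about $3\times 10^8$ exact feedback vertex set computations. To keep this cheap, I would first try a fast heuristic, such as a greedy or local search for an induced forest of the required size. Exact \texttt{fvs} computation is then only needed for the rare graphs where the heuristic fails, and those can be handled by the exact solver.
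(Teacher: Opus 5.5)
Your proposal is correct and follows the paper's argument essentially step for step. You reduce via Lemma~\ref{lem:order25And27} to triangulations on $25$ or $27$ vertices, use Theorem~\ref{th:nonHamiltonianDual} to restrict to nonhamiltonian cubic planar $3$-connected duals on $46$ or $50$ vertices (generated triangle-free with \texttt{plantri} and completed by vertex-to-triangle expansions), and then compute $\mathrm{fvs}$ of each dual with the Iwata--Imanishi solver; your heuristic pre-filter is only an implementation speed-up, and a sound one, since any induced forest of order at least $n/2$ it finds already certifies that the graph is not a counterexample.
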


Finally, we present two counterexamples on $29$ vertices to Conjecture~\ref{conj:AB} that can be obtained by appropriately combining two gadgets. We note that all small graphs from this paper can be downloaded from the \textit{House of Graphs}~\cite{CDG} by searching for the keyword ``Albertson-Berman''. Let $Q$ be the $14$-vertex graph shown in Fig.~\ref{fig:gadget14} and let $R$ be the $17$-vertex graph shown in Fig.~\ref{fig:gadget17}. Call the edge $xy$ in $Q$ and the edge $bc$ in $R$ \textit{distinguished edges}. The gadget $Q$ is, up to an isomorphism respecting the distinguished edge, the same $14$-vertex gadget that plays a central role in the counterexamples of Jung~\cite{J26} and Makarov~\cite{M26}. Jung's $31$-vertex counterexample uses two copies of $Q$, while Makarov's $39$-vertex counterexample is obtained by attaching three copies of $Q$ to the edges of a triangle. Both gadgets $Q$ and $R$ are closely related to the icosahedron graph. In $Q$, let $p$ and $q$ be the two common neighbours of $x$ and $y$. Deleting $x$ and $y$ and adding the edge $pq$ yields the icosahedron graph. In $R$, let $N$ be the subgraph induced by the closed neighbourhood of $\{b,c\}$. Besides $b$ and $c$, the graph $N$ contains four vertices $t,w,p,s$ such that $tbcw$ is a path, $p$ and $s$ are nonadjacent, and both $p$ and $s$ are adjacent to all four vertices of this path. Contracting $N$ to a single vertex yields the icosahedron graph. This also gives some intuition for why modifications of the icosahedron are useful here: if $I$ denotes the icosahedron graph, then $\alpha(I)=3=|V(I)|/4$ and $a(I)=6=|V(I)|/2$.

\begin{figure}[ht]
    \centering

    \begin{tikzpicture}[xscale=5.8,yscale=5.8]

    \tikzset{
        graphvertex/.style={
            circle,
            fill=black,
            inner sep=1.8pt
        },
        graphedge/.style={
            line width=0.55pt
        },
        distinguishededge/.style={
            line width=1.6pt
        }
    }

    \node[graphvertex,label=right:$x$]        (v0)  at (-0.0150,-0.19714) {};
    \node[graphvertex,label=right:$y$]        (v1)  at (-0.01929,0.01714) {};

    \node[graphvertex]                        (v2)  at ( 1.2000,-0.95864) {};
    \node[graphvertex]                        (v3)  at (-1.2000,-0.98239) {};
    \node[graphvertex]                        (v4)  at ( 0.45473,-0.03029) {};
    \node[graphvertex]                        (v5)  at ( 0.49731,-0.51834) {};
    \node[graphvertex]                        (v6)  at (-0.50531,-0.54607) {};
    \node[graphvertex]                        (v7)  at (-0.52835,-0.09787) {};
    \node[graphvertex]                        (v8)  at (-0.01777, 0.98239) {};
    \node[graphvertex]                        (v9)  at (-0.00418,-0.77504) {};
    \node[graphvertex,label=above right:]        (v10) at (-0.03301,0.25523) {};
    \node[graphvertex,label=below right:]        (v11) at (-0.01685,-0.38297) {};
    \node[graphvertex,label=above:]        (v12) at (0.18369,-0.10837) {};
    \node[graphvertex,label=above left:]         (v13) at (-0.22703,-0.12360) {};

    \draw[graphedge]
    (v0)--(v11)
    (v0)--(v12)
    (v0)--(v13)

    (v1)--(v10)
    (v1)--(v12)
    (v1)--(v13)

    (v2)--(v3)
    (v2)--(v4)
    (v2)--(v5)
    (v2)--(v8)
    (v2)--(v9)

    (v3)--(v6)
    (v3)--(v7)
    (v3)--(v8)
    (v3)--(v9)

    (v4)--(v5)
    (v4)--(v8)
    (v4)--(v10)
    (v4)--(v12)

    (v5)--(v9)
    (v5)--(v11)
    (v5)--(v12)

    (v6)--(v7)
    (v6)--(v9)
    (v6)--(v11)
    (v6)--(v13)

    (v7)--(v8)
    (v7)--(v10)
    (v7)--(v13)

    (v8)--(v10)

    (v9)--(v11)

    (v10)--(v12)
    (v10)--(v13)

    (v11)--(v12)
    (v11)--(v13);


    \draw[distinguishededge] (v0)--(v1);

    \end{tikzpicture}

    \caption{The $14$-vertex gadget $Q$. The distinguished edge is $xy$.}
    \label{fig:gadget14}
\end{figure}

\begin{figure}[ht]
    \centering

    \begin{tikzpicture}[xscale=5.8,yscale=5.8]

    \tikzset{
        graphvertex/.style={
            circle,
            fill=black,
            inner sep=1.8pt
        },
        graphedge/.style={
            line width=0.55pt
        },
        distinguishededge/.style={
            line width=1.6pt
        }
    }

    \node[graphvertex,label=right:$b$]         (v0)  at (0,-0.51472) {};
    \node[graphvertex,label=below right:$c$]         (v1)  at (0,-0.37373) {};
    \node[graphvertex]                        (v2)  at (0,-0.83241) {};
    \node[graphvertex]                        (v3)  at ( 1.2000,-0.97645) {};
    \node[graphvertex]                        (v4)  at (-1.2000,-0.97645) {};
    \node[graphvertex]                        (v5)  at (0,0.24929) {};
    \node[graphvertex]                        (v6)  at ( 0.50440,-0.07002) {};
    \node[graphvertex]                        (v7)  at (-0.49154,-0.07002) {};
    \node[graphvertex]                        (v8)  at (0,0.97645) {};
    \node[graphvertex,label=above:]        (v9)  at (0,-0.25916) {};
    \node[graphvertex]                        (v10) at ( 0.50131,-0.53814) {};
    \node[graphvertex]                        (v11) at (-0.50131,-0.53814) {};
    \node[graphvertex]                        (v12) at ( 0.37250,-0.13478) {};
    \node[graphvertex]                        (v13) at (-0.35965,-0.13478) {};
    \node[graphvertex,label=below right:]        (v14) at (0,-0.68551) {};
    \node[graphvertex,label=above:]        (v15) at (0.19064,-0.38624) {};
    \node[graphvertex,label=above:]         (v16) at (-0.17349,-0.38624) {};

    \draw[graphedge]
    (v0)--(v14)
    (v0)--(v15)
    (v0)--(v16)

    (v1)--(v9)
    (v1)--(v15)
    (v1)--(v16)

    (v2)--(v3)
    (v2)--(v4)
    (v2)--(v10)
    (v2)--(v11)
    (v2)--(v14)

    (v3)--(v4)
    (v3)--(v6)
    (v3)--(v8)
    (v3)--(v10)

    (v4)--(v7)
    (v4)--(v8)
    (v4)--(v11)

    (v5)--(v6)
    (v5)--(v7)
    (v5)--(v8)
    (v5)--(v12)
    (v5)--(v13)

    (v6)--(v8)
    (v6)--(v10)
    (v6)--(v12)

    (v7)--(v8)
    (v7)--(v11)
    (v7)--(v13)

    (v9)--(v12)
    (v9)--(v13)
    (v9)--(v15)
    (v9)--(v16)

    (v10)--(v12)
    (v10)--(v14)
    (v10)--(v15)

    (v11)--(v13)
    (v11)--(v14)
    (v11)--(v16)

    (v12)--(v13)
    (v12)--(v15)

    (v13)--(v16)

    (v14)--(v15)
    (v14)--(v16);

    \draw[distinguishededge] (v0)--(v1);

    \end{tikzpicture}

    \caption{The $17$-vertex gadget $R$. The distinguished edge is $bc$.}
    \label{fig:gadget17}
\end{figure}

For a graph $J\in\{Q,R\}$ with distinguished edge $uv$ and $X \subseteq\{u,v\}$, we define
$$
p_J(X):=\max\bigl\{|F|:F\subseteq V(J),\ J[F]\text{ is a forest and }
F\cap\{u,v\}=X\bigr\}.
$$
We computationally verified that
$$\bigl(p_Q(\emptyset),p_Q(\{x\}),p_Q(\{y\}),p_Q(\{x,y\})\bigr)
=(6,7,7,7)$$ 
and 
$$\bigl(p_R(\emptyset),p_R(\{b\}),p_R(\{c\}),p_R(\{b,c\})\bigr)
=(8,8,8,9).$$
These values are not essentially computational: using the descriptions of $Q$ and $R$ as modifications of the icosahedron graph given above, they can also be verified by a short case analysis. To the best of our knowledge, the gadget $R$ is new. The extra ninth vertex in an induced forest of $R$ is possible only when both endpoints of the distinguished edge are present. This property is a key ingredient in our counterexamples on $29$ and $52$ vertices.

\begin{figure}
\centering
\begin{tikzpicture}[
    rotate=90,
    x=4.4cm,
    y=4.4cm,
    gedge/.style={draw=black,line width=.38pt},
    outeredge/.style={draw=black,line width=1.05pt},
    vertex/.style={circle,fill=black,inner sep=1.15pt}
]

\coordinate (v0) at (-0.000228607864,-0.590457640896);
\coordinate (v1) at (0.000000000000,0.712468478565);
\coordinate (v2) at (0.004279697963,0.519336869640);
\coordinate (v3) at (0.110205920461,0.614375341754);
\coordinate (v4) at (-0.110206051250,0.618655105112);
\coordinate (v5) at (-0.077033708520,-0.741206758428);
\coordinate (v6) at (-0.115670320362,-0.493950261279);
\coordinate (v7) at (0.088004128195,0.817252062347);
\coordinate (v8) at (0.083724430231,0.409048293049);
\coordinate (v9) at (-0.083724430231,0.812972298989);
\coordinate (v10) at (-0.070885205552,0.400488831728);
\coordinate (v11) at (0.085606927182,-0.739910021218);
\coordinate (v12) at (-0.004543737972,-0.388381901438);
\coordinate (v13) at (0.114593477320,-0.485928887884);
\coordinate (v14) at (-0.263451289431,-0.492997952436);
\coordinate (v15) at (0.569931939951,-0.155708165538);
\coordinate (v16) at (0.000000000000,1.200000000000);
\coordinate (v17) at (0.000000000000,0.115152303566);
\coordinate (v18) at (-0.268216972310,0.540925829564);
\coordinate (v19) at (0.272502686596,0.532354400993);
\coordinate (v20) at (0.012857142857,-1.285714285714);
\coordinate (v21) at (-0.005528937039,-0.090533873839);
\coordinate (v22) at (0.002991120992,-1.079790347605);
\coordinate (v23) at (-0.284344824781,-0.257354200237);
\coordinate (v24) at (0.276289798875,-0.485409051147);
\coordinate (v25) at (0.280276512935,-0.247348356109);
\coordinate (v26) at (-0.564781356086,-0.163507199234);
\coordinate (v27) at (0.782820323028,0.000000000000);
\coordinate (v28) at (-0.769963180170,0.000000000000);

\draw[gedge] (v0)--(v5);
\draw[gedge] (v0)--(v6);
\draw[gedge] (v0)--(v11);
\draw[gedge] (v0)--(v12);
\draw[gedge] (v0)--(v13);

\draw[gedge] (v1)--(v2);
\draw[gedge] (v1)--(v3);
\draw[gedge] (v1)--(v4);
\draw[gedge] (v1)--(v7);
\draw[gedge] (v1)--(v9);

\draw[gedge] (v2)--(v3);
\draw[gedge] (v2)--(v4);
\draw[gedge] (v2)--(v8);
\draw[gedge] (v2)--(v10);

\draw[gedge] (v3)--(v7);
\draw[gedge] (v3)--(v8);
\draw[gedge] (v3)--(v19);

\draw[gedge] (v4)--(v9);
\draw[gedge] (v4)--(v10);
\draw[gedge] (v4)--(v18);

\draw[gedge] (v5)--(v6);
\draw[gedge] (v5)--(v11);
\draw[gedge] (v5)--(v14);
\draw[gedge] (v5)--(v22);

\draw[gedge] (v6)--(v12);
\draw[gedge] (v6)--(v14);
\draw[gedge] (v6)--(v23);

\draw[gedge] (v7)--(v9);
\draw[gedge] (v7)--(v16);
\draw[gedge] (v7)--(v19);

\draw[gedge] (v8)--(v10);
\draw[gedge] (v8)--(v17);
\draw[gedge] (v8)--(v19);

\draw[gedge] (v9)--(v16);
\draw[gedge] (v9)--(v18);

\draw[gedge] (v10)--(v17);
\draw[gedge] (v10)--(v18);

\draw[gedge] (v11)--(v13);
\draw[gedge] (v11)--(v22);
\draw[gedge] (v11)--(v24);

\draw[gedge] (v12)--(v13);
\draw[gedge] (v12)--(v23);
\draw[gedge] (v12)--(v25);

\draw[gedge] (v13)--(v24);
\draw[gedge] (v13)--(v25);

\draw[gedge] (v14)--(v22);
\draw[gedge] (v14)--(v23);
\draw[gedge] (v14)--(v26);

\draw[gedge] (v15)--(v20);
\draw[gedge] (v15)--(v21);
\draw[gedge] (v15)--(v24);
\draw[gedge] (v15)--(v25);
\draw[gedge] (v15)--(v27);

\draw[gedge] (v16)--(v18);
\draw[gedge] (v16)--(v19);

\draw[gedge] (v17)--(v18);
\draw[gedge] (v17)--(v19);
\draw[gedge] (v17)--(v27);
\draw[gedge] (v17)--(v28);

\draw[gedge] (v18)--(v28);
\draw[gedge] (v19)--(v27);

\draw[gedge] (v20)--(v22);
\draw[gedge] (v20)--(v24);
\draw[gedge] (v20)--(v26);

\draw[gedge] (v21)--(v23);
\draw[gedge] (v21)--(v25);
\draw[gedge] (v21)--(v26);
\draw[gedge] (v21)--(v27);
\draw[gedge] (v21)--(v28);

\draw[gedge] (v22)--(v24);
\draw[gedge] (v22)--(v26);

\draw[gedge] (v23)--(v25);
\draw[gedge] (v23)--(v26);

\draw[gedge] (v24)--(v25);

\draw[gedge] (v26)--(v28);

\draw[gedge] (v16)--(v27);
\draw[gedge] (v16)--(v28);
\draw[gedge] (v20)--(v27);
\draw[gedge] (v20)--(v28);
\draw[gedge] (v27)--(v28);

\draw[gedge,dashed] (v16) .. controls (1.10,0.95) and (1.10,-1.05) .. (v20);

\foreach \i in {0,...,28}{
    \node[vertex] at (v\i) {};
}

\end{tikzpicture}

\caption{The $29$-vertex graph $G_{29}$ obtained by identifying $xy$ in $Q$ and $bc$ in $R$. Adding the dashed edge to $G_{29}$ results in $G'_{29}$.}
\label{fig:graph29}
\end{figure}

Let $G_{29}$ be the graph obtained by identifying $xy$ in $Q$ and $bc$ in $R$. Note that $G_{29}$ is planar (Fig.~\ref{fig:graph29} shows a planar embedding). For every induced forest of $G_{29}$, its restrictions to $Q$ and $R$ have the same intersection with the identified distinguished edge. Therefore, we obtain $a(G_{29}) \leq \max(6+8,7+8-1,7+8-1,7+9-2)=14$. By combining the induced forests in $Q$ and $R$ that both avoid the endpoints of the distinguished edge, we obtain $a(G_{29})=14$ and therefore $G_{29}$ is a counterexample to Conjecture~\ref{conj:AB}. The graph $G_{29}$ is a planar graph having $29$ vertices and $3\cdot 29-7=80$ edges. Up to isomorphism, there is a unique way to add one more edge to $G_{29}$ that results in a planar triangulation $G_{29}'$ and $G_{29}'$ is again a counterexample to Conjecture~\ref{conj:AB} for which $a(G_{29}')=14$ (see Fig.~\ref{fig:graph29}). We obtain:

\begin{thr}
    The minimum order of a counterexample to the Albertson-Berman conjecture is $29$.
\end{thr}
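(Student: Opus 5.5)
The theorem has two halves: no counterexample has at most $28$ vertices, and some counterexample has exactly $29$ vertices. I will assemble both from results already established in this section.

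For the lower bound, argue by contradiction. Suppose a counterexample exists on at most $28$ vertices. By Lemma~\ref{lem:order25And27} there is then a planar triangulation $G$ on $25$ or $27$ vertices that is a counterexample. Any partition of $V(G)$ into two sets inducing forests would give a part of size at least $|V(G)|/2$, so no such partition exists. By Theorem~\ref{th:nonHamiltonianDual}, the dual $G^*$ is therefore nonhamiltonian. Since $G$ is a triangulation on at least $4$ vertices, $G^*$ is a cubic planar $3$-connected graph, on $2|V(G)|-4\in\{46,50\}$ vertices. Hence $G$ is the dual of one of the graphs counted in the rows $n=46$ and $n=50$ of Table~\ref{tab:counts}.

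The exhaustive computation behind the preceding Observation computes $a(G)$ for the dual of every such graph and finds $a(G)\ge |V(G)|/2$ in every case. This contradicts our choice of $G$, so no counterexample has at most $28$ vertices. The step I expect to be the real obstacle is the correctness and completeness of this enumeration: the \texttt{plantri} generation of triangle-free nonhamiltonian graphs on $50$ vertices, the reduction by contracting triangular faces (with its inverse, expanding vertices into triangles), and the feedback vertex set solver. I take these as given, as the paper does.

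For the upper bound, take $G_{29}$, obtained by identifying $xy$ in $Q$ with $bc$ in $R$. It is planar and has $29$ vertices. Let $F$ be any induced forest of $G_{29}$. Its restrictions to $Q$ and to $R$ are induced forests with the same trace $X$ on the identified edge. Since the vertices of $X$ are counted in both restrictions, $|F|\le p_Q(X)+p_R(X')-|X|$, where $X'$ is the corresponding subset of $\{b,c\}$. Plugging in the verified values $(6,7,7,7)$ and $(8,8,8,9)$ gives the four bounds $14$, $14$, $14$ and $14$. Hence $a(G_{29})\le 14<29/2$, and $G_{29}$ is a counterexample on $29$ vertices. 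Together with the lower bound, the minimum order of a counterexample is exactly $29$.
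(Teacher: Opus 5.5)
Your proposal is correct and follows the paper's argument essentially step for step. For the lower bound, Lemma~\ref{lem:order25And27} and Stein's theorem reduce the question to the exhaustive computation over the duals of the nonhamiltonian cubic planar $3$-connected graphs on $46$ and $50$ vertices. For the upper bound, you use the same gluing estimate $a(G_{29})\le\max(6+8,\,7+8-1,\,7+8-1,\,7+9-2)=14$; only $a(G_{29})<29/2$ is needed, so leaving out the paper's matching forest of order $14$ loses nothing.
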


We now recall standard terminology that we will use in the next section. A graph with at least $k+1$ vertices is \textit{$k$-connected} if deleting any set of fewer than $k$ vertices leaves a connected graph. The \textit{vertex connectivity} of a graph is the largest integer $k$ for which it is $k$-connected. Similarly, a graph is \textit{$k$-edge-connected} if deleting any set of fewer than $k$ edges leaves a connected graph, and its \textit{edge connectivity} is the largest integer $k$ for which it is $k$-edge-connected. An edge cut is called \textit{cyclic} if its removal leaves at least two components containing a cycle. A graph is \textit{cyclically $k$-edge-connected} if every cyclic edge cut has size at least $k$.

We note that $G_{29}$ has vertex-connectivity $2$, whereas $G_{29}'$ has vertex-connectivity $3$.

\section{A minimum order counterexample with vertex-connectivity $4$ and edge-connectivity $5$ and an infinite family}
\label{sec:4Connected}

A natural question is to investigate whether the Albertson-Berman conjecture might be true under a stronger connectivity assumption. Indeed, Jung~\cite{J26} asked whether the Albertson-Berman conjecture fails for 4-connected planar graphs.

In the context of the second author's paper~\cite{GZ17}, we previously already generated all nonhamiltonian cubic planar $3$-connected graphs of girth $5$ on at most $70$ vertices and all such cyclically $4$-edge-connected graphs on at most $78$ vertices. For all of these graphs, we now calculated the dual and determined the maximum order of an induced forest.

By Observation~\ref{obs:ToPlanarTriangulation}, any $4$-connected $5$-edge-connected planar counterexample can be modified to a planar triangulation that is still $4$-connected and $5$-edge-connected and remains a counterexample by repeatedly adding edges. If $T$ is a planar triangulation on $n$ vertices, then its dual $T^*$ has $2n-4$ vertices. Moreover, $T$ is $4$-connected and $5$-edge-connected if and only if $T^*$ is cyclically $4$-edge-connected and has girth at least $5$. By performing the computations mentioned in the previous paragraph, we found exactly one counterexample $G_{41}$ on $\frac{78+4}{2}=41$ vertices that has vertex-connectivity $4$ and edge-connectivity $5$ and every maximum induced forest has $20$ vertices. The planar triangulation $G_{41}$ can again be obtained by appropriately combining the graphs $Q$ and $R$ (see Fig.~\ref{fig:graph41}).

Together with Theorem~\ref{th:nonHamiltonianDual} and our exhaustive computations, this yields the following.

\begin{thr}
\label{th:4Connected}
The minimum order of a $4$-connected $5$-edge-connected planar graph that is a counterexample to Conjecture~\ref{conj:AB} is $41$.
\end{thr}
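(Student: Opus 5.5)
The plan has two halves: exhibit a $41$-vertex example, and rule out smaller ones by reducing to the exhaustive computations on cubic duals. For the upper bound, we use the triangulation $G_{41}$ built from copies of $Q$ and $R$. We then check three things. First, it is planar (via its embedding) and is a triangulation. Second, its vertex-connectivity is $4$ and its edge-connectivity is $5$; this is a direct computation, or equivalently we check that the dual $G_{41}^*$ on $78$ vertices has girth $5$ and is cyclically $4$-edge-connected. Third, $a(G_{41})=20<41/2$. For the third point, the natural route is to bound $a(G_{41})$ by summing the gadget values $p_Q$ and $p_R$ over the copies, subtracting overlaps on the identified distinguished edges, and maximising over the finitely many intersection patterns, exactly as was done for $G_{29}$. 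As a fallback, the exact feedback vertex set computation gives $\mathrm{fvs}(G_{41})=21$ directly.

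For the lower bound, suppose $G$ is a $4$-connected $5$-edge-connected planar counterexample on $n\le 40$ vertices. The first step is to apply Observation~\ref{obs:ToPlanarTriangulation}. Adding edges preserves planarity and the counterexample property, and it can only increase vertex- and edge-connectivity. So we obtain a planar triangulation $T$ on $n$ vertices that is still $4$-connected, $5$-edge-connected and a counterexample. By Theorem~\ref{th:nonHamiltonianDual}, the dual $T^*$ must be nonhamiltonian, since otherwise $V(T)$ splits into two induced forests and $a(T)\ge n/2$. The dual $T^*$ is a cubic planar $3$-connected graph on $2n-4\le 76$ vertices. By the stated equivalence, it has girth at least $5$ and is cyclically $4$-edge-connected.

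This places $T^*$ in the exhaustively generated list of nonhamiltonian cyclically $4$-edge-connected cubic planar graphs of girth $5$ on at most $78$ vertices from~\cite{GZ17}. For each graph in that list we computed $a$ of its dual, and none of order at most $76$ yields a counterexample. This contradicts the existence of $T$, so no such counterexample exists on $n\le 40$ vertices.

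Unlike Lemma~\ref{lem:order25And27}, we cannot reduce parity by deleting a vertex or adding a disjoint $K_4$, because those operations destroy $4$-connectivity. Instead, all orders $n\le 40$ are covered directly by the list. The main obstacle is justifying the duality equivalence (4-connected and 5-edge-connected triangulation versus girth-5 cyclically 4-edge-connected dual). For it I would argue that separating triangles in $T$ correspond to cyclic $3$-edge-cuts in $T^*$. Likewise, vertices of degree at most $4$ in $T$ correspond to faces of size at most $4$ in $T^*$; since $T^*$ is cubic, its short cycles are exactly facial boundaries or cyclic cuts, so this covers girth at least $5$. The other difficulty is the reliability of the computations themselves, which would be supported by independent implementations and count cross-checks.
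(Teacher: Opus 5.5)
Your proposal is correct and follows the paper's argument: pass to a triangulation, use Stein's theorem to force a nonhamiltonian dual that is cyclically $4$-edge-connected of girth at least $5$ on at most $76$ vertices, and then invoke the exhaustive list from~\cite{GZ17} together with the computation $a(G_{41})=20$. One caution on your hand bound for $G_{41}$: there the copies are $Q-xy$ and $R-bc$, so the two shared vertices are nonadjacent and the $G_{29}$-style sum needs an extra step. When $F$ contains both shared vertices, at most one copy can join them inside $F$ (two such joins would close a cycle), and that copy contributes at most $p_J(\{v\})+1$, which still gives $20$; the paper itself simply relies on the computation.
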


By deleting an edge from this counterexample in all possible ways and verifying that this does not yield a new $4$-connected $5$-edge-connected counterexample, we obtain in fact the following slightly stronger conclusion.

\begin{thr}
\label{th:4ConnectedUniqueness}
There is a unique $4$-connected $5$-edge-connected planar graph on at most $41$ vertices that is a counterexample to Conjecture~\ref{conj:AB}.
\end{thr}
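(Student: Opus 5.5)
To prove Theorem~\ref{th:4ConnectedUniqueness}, I would show that every $4$-connected $5$-edge-connected planar counterexample $G$ on $n\le 41$ vertices must already be the triangulation $G_{41}$. The plan has three steps: reduce to triangulations, use the exhaustive dual search to pin down the triangulation, then handle non-triangulations by an edge-deletion check.

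\textbf{Step 1: triangulate.} Since $G$ is $4$-connected, it is $3$-connected and hence has a unique embedding. I add edges inside non-triangular faces until I reach a planar triangulation $T$ on the same vertex set. I would take care that no added chord creates a separating triangle, so that $T$ stays $4$-connected; $5$-edge-connectivity is preserved automatically, since edge-connectivity can only grow when edges are added. By Observation~\ref{obs:ToPlanarTriangulation}, $a(T)\le a(G)<n/2$, so $T$ is again a counterexample. The step I expect to be the main obstacle is justifying the existence of chords that avoid separating triangles. For that I would argue face by face: in a face of length at least $4$, some diagonal avoids creating a separating triangle, because otherwise the two possible diagonals would both be blocked by outside paths of length $2$ that cross in the plane, which contradicts planarity or $4$-connectivity of $G$.

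\textbf{Step 2: identify the triangulation.} By the equivalence stated above, $T^*$ is a cubic planar $3$-connected cyclically $4$-edge-connected graph of girth at least $5$ on $2n-4\le 78$ vertices. Because $T$ is a counterexample, Theorem~\ref{th:nonHamiltonianDual} says $T^*$ is nonhamiltonian. The exhaustive computation over all such nonhamiltonian duals on at most $78$ vertices then shows $n=41$ and $T\cong G_{41}$. Odd $n<41$ is ruled out directly by this search. Even $n$ also needs no separate reduction here, because the search covers every order up to $78$ dual vertices. This step gives Theorem~\ref{th:4Connected} together with the uniqueness of the triangulation.

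\textbf{Step 3: non-triangulations.} If $G$ is not a triangulation, then $G$ is a spanning subgraph of $G_{41}$ obtained by deleting at least one edge. Deleting edges can only increase $a$, and it preserves the counterexample property only if $a$ stays at $20$. So it suffices to check single edge deletions. If no single deletion $G_{41}-e$ is both a counterexample and $4$-connected and $5$-edge-connected, then no deletion of several edges is either. This works because any such $G$ contains some $G_{41}-e$ as a supergraph that is still $4$-connected and $5$-edge-connected, and $a(G_{41}-e)\le a(G)<41/2$ would make $G_{41}-e$ a counterexample.

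The single-deletion check is a finite computation over the $3\cdot41-6=117$ edges, using the same feedback vertex set solver as before. Up to the automorphisms of $G_{41}$, I expect most deletions to drop the edge-connectivity to $4$, because $G_{41}$ has degree-$5$ vertices, and the remaining deletions to raise $a$ to $21$.
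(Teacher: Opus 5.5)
Your proof is correct and is essentially the paper's proof: the exhaustive search over cyclically $4$-edge-connected nonhamiltonian cubic planar duals of girth $5$ on at most $78$ vertices leaves only the triangulation $G_{41}$, and checking every single-edge deletion of $G_{41}$ handles the non-triangulations, a reduction you justify correctly (the paper leaves it implicit). Two minor corrections: the separating-triangle ``obstacle'' in Step~1 is vacuous, since vertex-connectivity, like edge-connectivity, cannot drop when edges are added; and your predicted outcome of the deletion check is wrong, since the paper records $a(G_{41}-e)=20$ for the bold edge $e$, whose endpoints have degree $7$, so that deletion is excluded neither by a degree-$5$ endpoint nor by $a$ rising to $21$, but because the two yellow vertices together with the blue endpoint of $f$ form a $3$-cut of $G_{41}-e$.
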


We note that this positively answers Question~2 from Jung~\cite{J26}, where it was asked whether the Albertson-Berman conjecture fails for $4$-connected planar graphs. 

\begin{figure}
\centering
\begin{tikzpicture}[
xscale=1.6,
x=.1cm,
y=.1cm,
leftedge/.style={draw=blue!70!black,line width=.38pt},
middleedge/.style={draw=green!50!black,line width=.38pt},
rightedge/.style={draw=red!70!black,line width=.38pt},
crossedge/.style={draw=black,line width=1.3pt},
vertex/.style={circle,draw=black,fill=black,inner sep=1.55pt},
leftvertex/.style={vertex,fill=blue!25},
middlevertex/.style={vertex,fill=green!25},
rightvertex/.style={vertex,fill=red!25},
sharedvertex/.style={vertex,fill=yellow!70}
]

\coordinate (v41) at (73.986665,32.200000);
\coordinate (v40) at (80.199998,25.826667);
\coordinate (v39) at (76.999997,34.333334);
\coordinate (v38) at (69.893331,40.106667);
\coordinate (v37) at (68.466664,34.546667);
\coordinate (v36) at (74.293331,26.373334);
\coordinate (v35) at (50.666665,49.813334);
\coordinate (v34) at (49.386665,50.613334);
\coordinate (v33) at (49.999998,41.293334);
\coordinate (v32) at (82.386665,17.706667);
\coordinate (v31) at (85.946664,25.226667);
\coordinate (v30) at (70.266664,46.133333);
\coordinate (v29) at (63.493332,45.693334);
\coordinate (v28) at (67.119998,28.106667);
\coordinate (v27) at (51.786665,44.733334);
\coordinate (v26) at (51.719998,55.253334);
\coordinate (v25) at (50.146665,59.120000);
\coordinate (v24) at (48.493331,56.920000);
\coordinate (v23) at (48.306665,45.626667);
\coordinate (v22) at (49.866665,31.426667);
\coordinate (v21) at (64.279998,20.186667);
\coordinate (v20) at (28.106665,23.693334);
\coordinate (v19) at (15.440000,16.333334);
\coordinate (v18) at (99.999999,6.706668);
\coordinate (v17) at (64.719998,59.480001);
\coordinate (v16) at (57.159998,46.866667);
\coordinate (v15) at (53.746665,48.386667);
\coordinate (v14) at (50.026665,71.226668);
\coordinate (v13) at (46.386665,50.133334);
\coordinate (v12) at (49.733331,13.533335);
\coordinate (v11) at (35.333331,20.333334);
\coordinate (v10) at (35.426664,33.493334);
\coordinate (v9) at (29.386665,33.613334);
\coordinate (v8) at (22.026666,26.733334);
\coordinate (v7) at (14.599997,24.400000);
\coordinate (v6) at (0.000000,6.706668);
\coordinate (v5) at (49.999998,93.293331);
\coordinate (v4) at (42.519998,47.466667);
\coordinate (v3) at (34.453331,44.480000);
\coordinate (v2) at (26.439999,39.346667);
\coordinate (v1) at (32.439998,54.759999);

\draw[leftedge]
(v20)--(v8)
(v20)--(v9)
(v20)--(v10)
(v20)--(v11)
(v20)--(v19)
(v19)--(v6)
(v19)--(v7)
(v19)--(v8)
(v19)--(v11)
(v12)--(v4)
(v12)--(v6)
(v12)--(v11)
(v11)--(v4)
(v11)--(v6)
(v11)--(v10)
(v10)--(v3)
(v10)--(v4)
(v10)--(v9)
(v9)--(v2)
(v9)--(v3)
(v9)--(v8)
(v8)--(v2)
(v8)--(v7)
(v7)--(v1)
(v7)--(v2)
(v7)--(v6)
(v6)--(v1)
(v6)--(v5)
(v5)--(v1)
(v5)--(v4)
(v4)--(v1)
(v4)--(v3)
(v3)--(v1)
(v3)--(v2)
(v2)--(v1);

\draw[middleedge]
(v35)--(v25)
(v35)--(v26)
(v35)--(v27)
(v35)--(v33)
(v35)--(v34)
(v34)--(v23)
(v34)--(v24)
(v34)--(v25)
(v34)--(v33)
(v33)--(v22)
(v33)--(v23)
(v33)--(v27)
(v27)--(v15)
(v27)--(v22)
(v27)--(v26)
(v26)--(v14)
(v26)--(v15)
(v26)--(v25)
(v25)--(v14)
(v25)--(v24)
(v24)--(v13)
(v24)--(v14)
(v24)--(v23)
(v23)--(v13)
(v23)--(v22)
(v22)--(v12)
(v22)--(v13)
(v22)--(v15)
(v15)--(v5)
(v15)--(v12)
(v15)--(v14)
(v14)--(v5)
(v14)--(v13)
(v13)--(v5)
(v13)--(v12);

\draw[rightedge]
(v41)--(v36)
(v41)--(v37)
(v41)--(v38)
(v41)--(v39)
(v41)--(v40)
(v40)--(v31)
(v40)--(v32)
(v40)--(v36)
(v40)--(v39)
(v39)--(v30)
(v39)--(v31)
(v39)--(v38)
(v38)--(v29)
(v38)--(v30)
(v38)--(v37)
(v37)--(v28)
(v37)--(v29)
(v37)--(v36)
(v36)--(v28)
(v36)--(v32)
(v32)--(v18)
(v32)--(v21)
(v32)--(v28)
(v32)--(v31)
(v31)--(v17)
(v31)--(v18)
(v31)--(v30)
(v30)--(v17)
(v30)--(v29)
(v29)--(v16)
(v29)--(v17)
(v29)--(v28)
(v28)--(v16)
(v28)--(v21)
(v21)--(v12)
(v21)--(v16)
(v21)--(v18)
(v18)--(v5)
(v18)--(v12)
(v18)--(v17)
(v17)--(v5)
(v17)--(v16)
(v16)--(v5)
(v16)--(v12);

\draw[crossedge] (v4)--(v13);
\draw[crossedge] (v6)--(v18);
\draw[crossedge] (v15)--(v16);

\foreach \i in {1,2,3,4,6,7,8,9,10,11,19,20}{
    \node[leftvertex] at (v\i) {};
}

\foreach \i in {13,14,15,22,23,24,25,26,27,33,34,35}{
    \node[middlevertex] at (v\i) {};
}

\foreach \i in {16,17,18,21,28,29,30,31,32,36,37,38,39,40,41}{
    \node[rightvertex] at (v\i) {};
}

\foreach \i in {5,12}{
    \node[sharedvertex] at (v\i) {};
}

\node at (50,4) {$e$};
\node at (45,52) {$f$};

\end{tikzpicture}
\caption{The unique smallest $4$-connected $5$-edge-connected planar graph $G_{41}$ that is a counterexample to Conjecture~\ref{conj:AB}. It has $41$ vertices. The blue vertices together with the two yellow vertices induce a copy of $Q-xy$, the green vertices together with the two yellow vertices induce another copy of $Q-xy$, and the red vertices together with the two yellow vertices induce a copy of $R-bc$. The edges $e$ and $f$ are shown in bold; together with the third thick black edge, they are the only edges not belonging to any of these three induced subgraphs.}
\label{fig:graph41}
\end{figure}

Moreover, it is not too difficult to describe an infinite family of planar $4$-connected $5$-edge-connected counterexamples to Conjecture~\ref{conj:AB}. We computationally verified that there are edges $e,f \in E(G_{41})$ (shown in bold in Fig.~\ref{fig:graph41}) such that $a(G_{41}-e)=a(G_{41}-f)=a(G_{41}-\{e,f\})=20$. If one deletes $e$ or $f$, a face of size $4$ is obtained. One can start with $G_{41}$, then repeatedly delete the edge corresponding to $f$ in the last added copy, place a new copy of $G_{41}-e$ inside the resulting face of size $4$ and add eight edges between the two facial cycles of length $4$ needed to triangulate the graph without creating a separating triangle (more precisely, label the facial cycles $x_1x_2x_3x_4x_1$ and $y_1y_2y_3y_4y_1$ in compatible cyclic order and add the edges $x_iy_i$ and $x_iy_{i+1}$ for $i\in\{1,2,3,4\}$, where indices are taken modulo $4$). If one performs this operation $k-1$ times, one obtains a planar triangulation $G_{41,k}$ on $41k$ vertices without a separating triangle and with minimum degree at least $5$ (therefore $G_{41,k}$ is $4$-connected and $5$-edge-connected, since the edge-connectivity of a planar triangulation equals its minimum degree~\cite{PPU23}). Moreover, every induced forest of $G_{41,k}$ contains at most $20$ vertices from each of the $k$ corresponding $41$-vertex subgraphs, so $a(G_{41,k})\leq 20k<\frac{41k}{2}$.

\section{An infinite family of graphs showing that $c_{\mathcal P} \leq 25/52$}
\label{sec:ratio2552}
We will now construct a planar graph $G_{52}$ on $52$ vertices such that every maximum induced forest in this graph has order $25$. The graph $G_{52}$ is obtained by taking the disjoint union of three copies $R_1, R_2, R_3$ of $R$ (having distinguished edges $b_ic_i$, $i \in \{1,2,3\}$), adding a new vertex $z$ and adding the edges $zb_i, zc_i, b_ic_{i+1},c_ic_{i+1}$ for every $i \in \{1,2,3\}$ (indices taken modulo $3$). Fig.~\ref{fig:52vertex} shows a planar embedding of $G_{52}$.

\begin{figure}
    \centering

\begin{tikzpicture}[xscale=4.3,yscale=4.3]

\tikzset{
    graphvertex/.style={
        circle,
        fill=black,
        inner sep=1.45pt
    },
    graphedge/.style={
        line width=0.45pt
    },
    connectingedge/.style={
        line width=1.6pt
    }
}

\coordinate (v0) at (0.015142857143,-0.322124447228);

\coordinate (v1)  at ( 0.702264673949,-0.213716840611);
\coordinate (v2)  at (-0.017142857143, 1.236373341684);
\coordinate (v3)  at ( 0.442886118845, 0.095644075126);
\coordinate (v4)  at ( 0.345281615251, 0.038780927541);
\coordinate (v5)  at ( 0.292159844261, 0.254627730633);
\coordinate (v6)  at ( 0.108495873640, 0.093991887598);
\coordinate (v7)  at ( 0.247677409432,-0.018083070979);
\coordinate (v8)  at ( 0.141433351497, 0.413610526817);
\coordinate (v9)  at ( 0.194555583897, 0.197763785904);
\coordinate (v10) at ( 0.050659585530, 0.775832490610);
\coordinate (v11) at ( 0.510733364870,-0.111622974856);
\coordinate (v12) at ( 0.276595944498, 0.430858804762);
\coordinate (v13) at ( 0.066427263703,-0.045203418659);
\coordinate (v14) at ( 0.083673802880, 0.584456711189);
\coordinate (v15) at ( 0.645164656595,-0.167260743948);
\coordinate (v16) at ( 0.038048116094,-0.095176539704);
\coordinate (v17) at ( 0.204613894431, 0.613028728585);

\coordinate (v18) at (-0.236162024998,-1.021224880751);
\coordinate (v19) at ( 1.388571428571,-1.172087627399);
\coordinate (v20) at ( 0.161441664520,-0.951276920702);
\coordinate (v21) at ( 0.160998985969,-0.838317367273);
\coordinate (v22) at ( 0.374488686267,-0.900235965648);
\coordinate (v23) at ( 0.327205950751,-0.660860379813);
\coordinate (v24) at ( 0.160555421599,-0.725357646258);
\coordinate (v25) at ( 0.587535072910,-0.849194391983);
\coordinate (v26) at ( 0.374045195755,-0.787276224291);
\coordinate (v27) at ( 0.946615378347,-0.951692986555);
\coordinate (v28) at (-0.051980489145,-0.906400834345);
\coordinate (v29) at ( 0.534891223281,-0.974872770135);
\coordinate (v30) at ( 0.227693584414,-0.554830241777);
\coordinate (v31) at ( 0.764371983024,-0.884596247756);
\coordinate (v32) at (-0.167379856451,-0.995002863497);
\coordinate (v33) at ( 0.198605165887,-0.505266618487);
\coordinate (v34) at ( 0.728646030151,-1.003619448073);

\coordinate (v35) at (-0.466405169959, 0.195232245224);
\coordinate (v36) at (-1.418571428571,-1.193516198827);
\coordinate (v37) at (-0.604630304374,-0.184076630562);
\coordinate (v38) at (-0.506583122228,-0.240173036406);
\coordinate (v39) at (-0.666951051537,-0.394101241123);
\coordinate (v40) at (-0.436004345400,-0.472840983923);
\coordinate (v41) at (-0.408535352040,-0.296268758901);
\coordinate (v42) at (-0.729270945415,-0.604125610973);
\coordinate (v43) at (-0.568903300661,-0.450197037752);
\coordinate (v44) at (-0.997577484885,-0.863848980193);
\coordinate (v45) at (-0.459055396734,-0.021685666936);
\coordinate (v46) at (-0.811789688787,-0.495695510764);
\coordinate (v47) at (-0.309851940554,-0.439675815702);
\coordinate (v48) at (-0.848348306913,-0.739569939571);
\coordinate (v49) at (-0.478087321153, 0.122554131307);
\coordinate (v50) at (-0.225384374418,-0.431552032232);
\coordinate (v51) at (-0.933562445590,-0.649118756651);

\draw[graphedge]
(v1)--(v2)
(v1)--(v15)
(v1)--(v16)
(v1)--(v17)
(v2)--(v10)
(v2)--(v16)
(v2)--(v17)
(v3)--(v4)
(v3)--(v5)
(v3)--(v11)
(v3)--(v12)
(v3)--(v15)
(v4)--(v5)
(v4)--(v7)
(v4)--(v9)
(v4)--(v11)
(v5)--(v8)
(v5)--(v9)
(v5)--(v12)
(v6)--(v7)
(v6)--(v8)
(v6)--(v9)
(v6)--(v13)
(v6)--(v14)
(v7)--(v9)
(v7)--(v11)
(v7)--(v13)
(v8)--(v9)
(v8)--(v12)
(v8)--(v14)
(v10)--(v13)
(v10)--(v14)
(v10)--(v16)
(v10)--(v17)
(v11)--(v13)
(v11)--(v15)
(v11)--(v16)
(v12)--(v14)
(v12)--(v15)
(v12)--(v17)
(v13)--(v14)
(v13)--(v16)
(v14)--(v17)
(v15)--(v16)
(v15)--(v17);

\draw[graphedge]
(v18)--(v19)
(v18)--(v32)
(v18)--(v33)
(v18)--(v34)
(v19)--(v27)
(v19)--(v33)
(v19)--(v34)
(v20)--(v21)
(v20)--(v22)
(v20)--(v28)
(v20)--(v29)
(v20)--(v32)
(v21)--(v22)
(v21)--(v24)
(v21)--(v26)
(v21)--(v28)
(v22)--(v25)
(v22)--(v26)
(v22)--(v29)
(v23)--(v24)
(v23)--(v25)
(v23)--(v26)
(v23)--(v30)
(v23)--(v31)
(v24)--(v26)
(v24)--(v28)
(v24)--(v30)
(v25)--(v26)
(v25)--(v29)
(v25)--(v31)
(v27)--(v30)
(v27)--(v31)
(v27)--(v33)
(v27)--(v34)
(v28)--(v30)
(v28)--(v32)
(v28)--(v33)
(v29)--(v31)
(v29)--(v32)
(v29)--(v34)
(v30)--(v31)
(v30)--(v33)
(v31)--(v34)
(v32)--(v33)
(v32)--(v34);

\draw[graphedge]
(v35)--(v36)
(v35)--(v49)
(v35)--(v50)
(v35)--(v51)
(v36)--(v44)
(v36)--(v50)
(v36)--(v51)
(v37)--(v38)
(v37)--(v39)
(v37)--(v45)
(v37)--(v46)
(v37)--(v49)
(v38)--(v39)
(v38)--(v41)
(v38)--(v43)
(v38)--(v45)
(v39)--(v42)
(v39)--(v43)
(v39)--(v46)
(v40)--(v41)
(v40)--(v42)
(v40)--(v43)
(v40)--(v47)
(v40)--(v48)
(v41)--(v43)
(v41)--(v45)
(v41)--(v47)
(v42)--(v43)
(v42)--(v46)
(v42)--(v48)
(v44)--(v47)
(v44)--(v48)
(v44)--(v50)
(v44)--(v51)
(v45)--(v47)
(v45)--(v49)
(v45)--(v50)
(v46)--(v48)
(v46)--(v49)
(v46)--(v51)
(v47)--(v48)
(v47)--(v50)
(v48)--(v51)
(v49)--(v50)
(v49)--(v51);

\draw[connectingedge]
(v0)--(v1)
(v0)--(v2)
(v0)--(v18)
(v0)--(v19)
(v0)--(v35)
(v0)--(v36)
(v1)--(v19)
(v2)--(v19)
(v2)--(v35)
(v2)--(v36)
(v18)--(v36)
(v19)--(v36);

\foreach \i in {0,...,51}{
    \node[graphvertex] at (v\i) {};
}

\node[left=2pt] at (v0) {$z$};

\end{tikzpicture}
\caption{The $52$-vertex graph $G_{52}$. The bold edges indicate edges that are not present in one of the three copies of $R$.}
    \label{fig:52vertex}
\end{figure}

\begin{thr}
\label{th:25over52}
There exists a planar graph $G_{52}$ on $52$ vertices with $a(G_{52})=25$. Hence, we have $c_{\mathcal P}\leq \frac{25}{52}$.
\end{thr}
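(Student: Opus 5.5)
The plan is to bound $a(G_{52})$ from above by splitting an arbitrary induced forest $F$ of $G_{52}$ along the three copies $R_1,R_2,R_3$ and the extra vertex $z$. We then use the values of $p_R$ computed above together with a few short cycles created by the connecting edges. Planarity of $G_{52}$ is witnessed by the embedding in Fig.~\ref{fig:52vertex}, so only the equality $a(G_{52})=25$ needs an argument; the bound $c_{\mathcal P}\le 25/52$ then follows directly from the definition of $c_{\mathcal P}$.

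For the upper bound, fix an induced forest $F$ and set $X_i:=F\cap\{b_i,c_i\}$. Since $R_i$ is an induced subgraph of $G_{52}$, the set $F\cap V(R_i)$ induces a forest in $R_i$. Hence $|F\cap V(R_i)|\le p_R(X_i)$, which is at most $8$ unless $X_i=\{b_i,c_i\}$, in which case it is at most $9$. Call copy $i$ \emph{full} if $X_i=\{b_i,c_i\}$. I would then distinguish two cases.
\begin{enumerate}
\item Suppose $z\in F$. If some copy $i$ were full, then $z b_i c_i$ would be a triangle in $F$. So no copy is full, and $|F|\le 1+3\cdot 8=25$.
\item Suppose $z\notin F$. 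If two copies are full, they are consecutive modulo $3$, say $i$ and $i+1$. Then $b_i,c_i,c_{i+1}\in F$, and the edges $b_ic_i$, $b_ic_{i+1}$, $c_ic_{i+1}$ form a triangle, a contradiction. So at most one copy is full, and $|F|\le 9+8+8=25$.
\end{enumerate}
In both cases $|F|\le 25$, so $a(G_{52})\le 25$.

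For the lower bound, I would omit $z$ and proceed as follows. In $R_1$ take a forest of order $p_R(\{b_1,c_1\})=9$ containing both $b_1$ and $c_1$. In $R_2$ and $R_3$ take forests of order $p_R(\emptyset)=8$ avoiding the distinguished endpoints. Every edge of $G_{52}$ not lying inside some $R_i$ is incident to $z$ or to one of the vertices $b_i,c_i$. The only such vertices present in the union are $b_1$ and $c_1$. The connecting edges at these two vertices go to $z$ and to $c_2$ or $c_3$, all of which are absent. Hence the union induces the disjoint union of three forests, which is a forest of order $25$.

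The only real obstacle is making sure that no case in the upper bound is missed. Concretely, one has to check that the three connecting edges $b_ic_{i+1}$, $c_ic_{i+1}$, $b_ic_i$ really close a triangle for every pair of full copies, which works because any two of the three indices are consecutive modulo $3$. One also has to check that non-full copies contribute at most $8$ regardless of whether $X_i$ is empty or a singleton. Both checks are immediate from the listed values $(8,8,8,9)$ of $p_R$.
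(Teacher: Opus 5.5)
Your proof is correct and follows the paper's argument: the triangles $zb_ic_i$ and $b_ic_ic_{i+1}$ rule out $z$ occurring together with a full copy, and two full copies occurring together, which gives $|F|\le 25$. For the lower bound you use a harmless variant, namely one full copy of order $9$ and no $z$, where the paper takes $z$ together with three order-$8$ forests avoiding all $b_i,c_i$. One small slip: $c_1$ is also joined to $b_3$ by the connecting edge $b_3c_1$, which you did not list, but $b_3$ is absent from your forest, so the conclusion is unaffected.
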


\begin{proof}
Consider a vertex set $F \subseteq V(G_{52})$ such that $G_{52}[F]$ is a forest. Recall from Section~\ref{sec:minOrder} that $\bigl(p_R(\emptyset),p_R(\{b\}),p_R(\{c\}),p_R(\{b,c\})\bigr)=(8,8,8,9)$. Therefore, for each $i \in \{1,2,3\}$, we have $|V(R_i) \cap F| \leq 8$, unless $\{b_i,c_i\} \subseteq F$, in which case $|V(R_i) \cap F| \leq 9$.

Suppose $t$ is the number of copies $R_i$ for which $\{b_i,c_i\}\subseteq F$, and let $\varepsilon=1$ if $z\in F$ and $\varepsilon=0$ otherwise. Hence, we obtain $|F|\leq 24+t+\varepsilon$. We now show that $t+\varepsilon \leq 1$. If $z \in F$ and there exists an $i \in \{1,2,3\}$ such that $\{b_i,c_i\}\subseteq F$, then $\{z,b_i,c_i\}$ induces a triangle. If there exists an $i \in \{1,2,3\}$ such that $\{b_i,c_i\}\subseteq F$ and $\{b_{i+1},c_{i+1}\}\subseteq F$ (indices taken modulo $3$), then $\{b_i,c_i,c_{i+1}\}$ induces a triangle. Both cases contradict the fact that $G_{52}[F]$ is a forest and therefore $a(G_{52}) \leq 25$. By combining $z$ with a forest in each copy of $R_i$ that contains neither $b_i$ nor $c_i$, we conclude that $a(G_{52})=25$.
\end{proof}

By taking the disjoint union of $k \geq 1$ copies of $G_{52}$ and applying Observation~\ref{obs:ToPlanarTriangulation}, we obtain the following corollary.

\begin{cor}
\label{cor:infFam}
    For each integer $k \geq 1$, there exists a planar triangulation $G$ on $52k$ vertices for which $a(G) \leq 25k$.
\end{cor}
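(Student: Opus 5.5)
The plan is to build the graph in two steps: first take a disjoint union of copies of $G_{52}$, then triangulate it using Observation~\ref{obs:ToPlanarTriangulation}. Fix $k \geq 1$ and let $H_k$ be the disjoint union of $k$ copies of the graph $G_{52}$ from Theorem~\ref{th:25over52}. Then $H_k$ is planar, since a disjoint union of planar graphs is planar (place the embeddings in disjoint regions of the plane). It has exactly $52k$ vertices.

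The first key step is to bound $a(H_k)$. The forest number is additive over disjoint unions. Indeed, a set $F \subseteq V(H_k)$ induces a forest if and only if its intersection with each component copy induces a forest, because no cycle can use vertices from two different copies. Hence $a(H_k) = k\cdot a(G_{52}) = 25k$ by Theorem~\ref{th:25over52}. Only the inequality $a(H_k) \leq 25k$ is needed.

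The second step is to pass to a triangulation. Add edges to $H_k$, preserving planarity, until every face is a triangle; call the result $G$. This is exactly the operation behind Observation~\ref{obs:ToPlanarTriangulation}. Adding edges keeps the vertex set, so $|V(G)| = 52k$. Every induced forest of $G$ is, on the same vertex set, also an induced forest of $H_k$, since $H_k$ is a spanning subgraph of $G$. Therefore $a(G) \leq a(H_k) \leq 25k$.

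The only point that needs care is that a disconnected planar graph can always be completed to a triangulation on the same vertex set. I would justify this directly. First connect the components by planar edges drawn inside a common face. Then triangulate any face of length at least $4$ by adding a chord between two nonadjacent boundary vertices, which always exist in such a face of a simple graph. Each added edge keeps the graph simple and planar. Since $52k \geq 3$, the process ends in a planar triangulation. I do not expect any real obstacle here.
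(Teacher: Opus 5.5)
Your proposal is correct and matches the paper's argument. The paper likewise takes the disjoint union of $k$ copies of $G_{52}$, uses additivity of $a$ over disjoint unions to get $a \leq 25k$, and then triangulates via Observation~\ref{obs:ToPlanarTriangulation}; your extra paragraph on completing a disconnected plane graph to a triangulation just spells out the standard fact, left implicit in the paper, that a maximal planar graph on at least $3$ vertices is a triangulation.
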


Based on the computations performed for Theorem~\ref{th:4Connected}, we also draw the following conclusion.
\begin{obs}
    Let $G$ be a planar $4$-connected $5$-edge-connected graph on at most $41$ vertices. Then $\frac{a(G)}{|V(G)|} \geq \frac{25}{52}$.
\end{obs}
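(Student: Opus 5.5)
We reduce the statement to the exhaustive computation behind Theorem~\ref{th:4Connected}, handling small orders and non-triangulations separately. Let $G$ be a planar $4$-connected $5$-edge-connected graph on $n\le 41$ vertices. As in Observation~\ref{obs:ToPlanarTriangulation}, we add edges to $G$ until it becomes a planar triangulation $G'$ on $n$ vertices. Adding edges cannot decrease vertex- or edge-connectivity, so $G'$ is still $4$-connected and $5$-edge-connected. Since $a(G)\ge a(G')$, it suffices to prove $a(G')/n\ge 25/52$ for every such triangulation $G'$.

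We split into two cases according to Theorem~\ref{th:nonHamiltonianDual}.

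\emph{Case 1: $G'^*$ is Hamiltonian.} Then $V(G')$ partitions into two sets that each induce a forest. Hence $a(G')\ge n/2>25n/52$.

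\emph{Case 2: $G'^*$ is nonhamiltonian.} Since $G'$ is $4$-connected and $5$-edge-connected, its dual $G'^*$ is a nonhamiltonian cubic planar graph that is cyclically $4$-edge-connected, has girth at least $5$, and has $2n-4\le 78$ vertices. All such graphs were generated for Theorem~\ref{th:4Connected}, and $a$ was computed for each of their duals. The remaining task is to read off from this data the minimum ratio $a(T)/|V(T)|$ over the triangulations $T$ arising.

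Two integrality facts limit what the computation must show. First, if $a(T)\ge (n-1)/2$, then $a(T)/n\ge (n-1)/(2n)\ge 25/52$ exactly when $52(n-1)\ge 50n$, i.e.\ when $n\ge 26$. Second, the minimum counterexample has order $29$, so every triangulation on fewer than $29$ vertices satisfies $a(T)\ge n/2$. Consequently, only triangulations with $29\le n\le 41$ and $a(T)\le (n-2)/2$ could violate the bound. For $n=41$, the unique counterexample $G_{41}$ has $a=20=(n-1)/2$, giving ratio $20/41>25/52$. By Theorem~\ref{th:4ConnectedUniqueness}, it is the only counterexample in this class, so every other $T$ has $a(T)\ge n/2$.

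The main obstacle is that the statement rests entirely on the correctness and completeness of the generation. This means that the enumeration of cyclically $4$-edge-connected girth-$5$ nonhamiltonian cubic planar graphs on at most $78$ vertices must be exhaustive, and the feedback vertex set computations must be correct. We would cite these computations as already performed for Theorem~\ref{th:4Connected}, and note that the correspondence between $4$-connected $5$-edge-connected triangulations and cyclically $4$-edge-connected duals of girth at least $5$ is exactly the one used there.
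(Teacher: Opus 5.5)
Your proposal is correct and takes the same route as the paper, which reads the statement off the exhaustive computation behind Theorem~\ref{th:4Connected}: every $4$-connected $5$-edge-connected planar graph on at most $41$ vertices other than $G_{41}$ has $a(G)\ge |V(G)|/2$, and $a(G_{41})/41=20/41>25/52$. Your integrality bookkeeping (the thresholds $n\ge 26$ and $n\ge 29$) is redundant, because Theorem~\ref{th:4ConnectedUniqueness} together with $20/41>25/52$ already settles every case directly.
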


Finally, we draw the reader's attention to a conjecture by Chappell and Pelsmajer~\cite{CP13}. They considered a stronger variant of the Albertson-Berman conjecture for induced forests of bounded degree. More precisely, for a graph $G$ and an integer $d \geq 2$, let $f_d(G)$ be the maximum order of an induced forest of $G$ with maximum degree at most $d$. They formulated the following conjecture.

\begin{conj}[Chappell and Pelsmajer~\cite{CP13}]
\label{conj:CP}
Let $d\geq 2$ be an integer and let $G$ be a planar graph on $n$ vertices. Then we have $f_d(G)>\frac{2dn}{4d+1}$.
\end{conj}

The infinite family of counterexamples to Conjecture~\ref{conj:AB} from Corollary~\ref{cor:infFam} also forms an infinite family of counterexamples to Conjecture~\ref{conj:CP} for each integer $d \geq 7$, since $25k<\frac{2 \cdot d \cdot52k}{4 \cdot d+1}$.

\section{Open problems}
\label{sec:openProblems}
We conclude this paper by listing several natural open problems that remain.

\begin{prob}
    The results discussed in this paper imply that $\frac{2}{5} \leq c_{\mathcal{P}} \leq \frac{25}{52}$. Improve these bounds.
\end{prob}

\begin{prob}
    Does there exist a $5$-connected counterexample to the Albertson-Berman conjecture (i.e., Conjecture~\ref{conj:AB})?
\end{prob}

\begin{prob}
    Conjecture~\ref{conj:CP} remains open for $2 \leq d \leq 6$: Is it true in these cases that $f_d(G)>\frac{2dn}{4d+1}$?
\end{prob}

\section*{Acknowledgements}
Wouter Cames van Batenburg is supported by the Belgian National Fund for Scientific Research (FNRS). Jan Goedgebeur is supported by Internal Funds of KU Leuven and a grant of the Research Foundation Flanders (FWO) with grant number G0AGX24N. Several of the computations for this work were carried out using the supercomputer infrastructure provided by the VSC (Flemish Supercomputer Center), funded by the Research Foundation Flanders (FWO) and the Flemish Government.

\section*{Declaration of generative AI and AI-assisted technologies}

During the preparation of this work, the authors used ChatGPT (OpenAI, GPT-5.6 Sol) for assistance with proofreading, improving the presentation and rewriting parts of the proofs, as well as with the preparation of figures and tables. The authors critically reviewed all AI-assisted output and edited it where necessary. All mathematical statements, arguments, and proofs in this paper were independently verified by the authors, who take full responsibility for the correctness and content of the manuscript.


\end{document}